\documentclass{article}

\usepackage{microtype}
\usepackage[utf8]{inputenc}
\usepackage{graphicx}
\usepackage{subfigure}
\usepackage{booktabs} 
\usepackage{todonotes}
\usepackage{pgfplots}
\usepackage{xcolor}
\pgfplotsset{compat=newest}
\usepackage{dashrule}
\usepackage{hyperref}


\usepackage[preprint]{icml2019}

\usepackage{amsmath,amssymb,amsfonts,amsthm,bm,mathtools,commath}

\newcommand{\E}[2]{\mathbb{E}_{#1}\left[{#2}\right]}
\DeclareMathOperator*{\argmin}{\arg\!\min}

\DeclareMathOperator{\off}{off}
\DeclareMathOperator{\Exp}{Exp}

\renewcommand\abs[1]{\ensuremath{\left\lvert#1\right\rvert}}

\newcommand{\bbR}{\mathbb{R}}
\renewcommand{\O}[1]{\mathcal{O}\left(#1\right)}
\newcommand{\smallO}[1]{$o$\left(#1\right)}

\renewcommand{\P}{\mathcal{P}}
\newcommand{\N}{\mathcal{N}}
\newcommand{\Skew}{\mathrm{Skew}}
\newcommand{\symnorm}[1]{\norm{#1}_{F,\mathrm{sym}}}

\newtheorem{theorem}{Theorem}
\newtheorem{lemma}{Lemma}
\newtheorem{proposition}{Proposition}

\icmltitlerunning{Approximating Orthogonal Matrices with Effective Givens Factorization}

\begin{document}

\twocolumn[
\icmltitle{Approximating Orthogonal Matrices with Effective Givens Factorization}

\icmlsetsymbol{equal}{*}

\begin{icmlauthorlist}
\icmlauthor{Thomas Frerix}{tum}
\icmlauthor{Joan Bruna}{nyu}
\end{icmlauthorlist}

\icmlaffiliation{tum}{Technical University of Munich}
\icmlaffiliation{nyu}{New York University}

\icmlcorrespondingauthor{Thomas Frerix}{thomas.frerix@tum.de}
\icmlcorrespondingauthor{Joan Bruna}{bruna@cims.nyu.edu}

\icmlkeywords{Optimization, Computational Complexity, Numerical Linear Algebra}

\vskip 0.3in
]

\printAffiliationsAndNotice{}  

\begin{abstract}
We analyze effective approximation of unitary matrices.
In our formulation, a unitary matrix is represented as a product of rotations in two-dimensional subspaces, so-called Givens rotations.
Instead of the quadratic dimension dependence when applying a dense matrix, applying such an approximation scales with the number factors, each of which can be implemented efficiently.
Consequently, in settings where an approximation is once computed and then applied many times, such a representation becomes advantageous.
Although effective Givens factorization is not possible for generic unitary operators, 
we show that minimizing a sparsity-inducing objective with a coordinate descent algorithm on the unitary group yields good factorizations for structured matrices. 
Canonical applications of such a setup are orthogonal basis transforms.
We demonstrate numerical results of approximating the graph Fourier transform, which is the matrix obtained when diagonalizing a graph Laplacian.
\end{abstract}

\section{Introduction}
Unitary operators are ubiquitous in many areas, from numerical linear algebra to quantum computing and cryptography.
Celebrated applications include the QR-decomposition and the diagonalization of symmetric matrices \cite{Golub2000}.
Without any assumptions on the structure of the matrix, applying a unitary transformation in  $d$ dimensions requires $\mathcal{O}(d^2)$ operations for the matrix-vector product.
In scenarios where a given unitary operator needs to be intensively applied many times, using approximations that trade-off accuracy with a better scaling behavior in the dimension is desirable.

In this paper, we develop a method to compute approximations of unitary matrices in the form of Givens factorization \cite{Givens1958}.
Givens rotations are localized in a two-dimensional subspace of predefined coordinates. 
Therefore, computations with Givens sequences scale with the number of factors and the computational cost for applying each factor can be kept low since efficient implementations are possible \cite{Golub2012}.
Our main motivation comes from the success story of the Fast Fourier transform (FFT) \cite{Cooley1965}, which brought down the computational cost of applying a Fourier transform to $\mathcal{O}(d\log(d))$ operations. 
This reduction led to a revolution in signal processing and was recognized by \citet{Sullivan2000} as one of the most important algorithms of the 20th century.
However, this speed-up relies on the fact that the classical Fourier transform is defined over a periodic grid, which provides many symmetries leveraged in the butterfly structure of the FFT.

These symmetries do not carry over to unstructured domains such as graphs and general unitary operators.
In fact, using simple covering bounds, we show that generic unitary matrices require $\mathcal{O}(d^2/ \log d)$ Givens factors to be effectively approximated. 
However, the question of approximating with fewer factors in the presence of structure remains open: given an element $U \in U(d)$, how to produce the best possible $N$-term sequence of Givens rotations $G_1 \dots G_N$ that minimizes $\norm{U - \prod_j G_j}$ ?

Due to the combinatorial nature of selecting Givens subspaces, this is an NP-hard optimization problem.
In this paper, we propose a relaxation based on sparsity-inducing norms over the unitary group. In essence, given a point $U \in U(d)$, we use the gradient flow of a potential function $f: U(d) \to \mathbb{R}$ to define a path that links $U$ to its nearest signed permutation matrix, the sparsest elements of the group and thus the global minimizers of $f$.
Then, our algorithm tries to approximately follow this path using coordinate descent with the Givens factors acting as generators of the group. 

We validate our algorithm on a family of structured orthogonal operators, constructed with a planted random sequence of $K$ Givens factors and demonstrate that effective approximation is possible in the regime $K=\mathcal{O}(d \log d)$. 
Finally, we apply our algorithm to approximate a graph Fourier transform (GFT), the orthogonal matrix obtained when diagonalizing a graph Laplacian.

For ease of exposition, we restrict our discussion to approximating orthogonal group elements.
However, this does not impose a restriction on the outlined approaches, as they equally apply to the complex unitary group as well as the real orthogonal group.

\section{Related Work}
\label{sec:related_work}
Givens rotations were introduced by \cite{Givens1958} to factorize the unitary matrix that transforms a square matrix into triangular form.
The elementary operation of rotating in a two-dimensional subspace led to numerous successful applications in numerical linear algebra \cite{Golub2012}, in particular, for eigenvalue problems \cite{Golub2000}.
In this context, a Givens sequence factorizes a unitary basis transform, which is an operation of paramount importance to signal processing.

In contrast to signal processing on a Euclidean domain, recently there has been increased interest in signal processing on irregular domains such as graphs \cite{Shuman2013, Bronstein2017}.
In this setting, \citet{Magoarou2016} considered a truncated version of the classical Jacobi algorithm \cite{Jacobi1846} to approximate the orthogonal matrix that diagonalizes a graph Laplacian. Other notable strategies to efficiently approximate large matrices with presumed structure include multiresolution analysis \cite{kondor2014multiresolution} and sparsity \cite{kyng2016approximate}.

In quantum computation, approximate representation of unitary operators is a fundamental problem.
Here, a unitary operation that performs a computation on a quantum state needs to be represented by or approximated with few elementary single- and two-qubit gates, ideally polynomial in the number of qubits.
In the literature of quantum computing, a Givens rotation is commonly referred to as a two-level unitary matrix; a generic $n$-qubit unitary operator can be factorized in such two-level matrices with $\O{4^n}$ elementary quantum gates \cite{Vartiainen2004}.

An alternative viewpoint on Givens sequences was analyzed by \citet{Shalit2014}.
The authors considered manifold coordinate descent over the orthogonal group as sequentially applying Givens factors.
Consequently, the minimizing sequence of this algorithm yields a Givens factorization of the initial orthogonal matrix.

In this work, we analyze information theoretic properties of approximating unitary matrices via Givens factorization.
We then propose to minimize a sparsity-inducing objective via manifold coordinate descent in a regime where effective approximation is possible.
Subsequently, we apply this approach to approximate the graph Fourier transform and demonstrate that the proposed method can find better sequences compared to a truncated Jacobi algorithm.
This allows to efficiently transform a graph signal into the graph's approximate Fourier basis, an essential operation in graph signal processing.

\section{Givens Factorization and Elimination}
\label{sec:givens_factorization_intro}

Givens matrices represent rotations in a two-dimensional subspace, while leaving all other dimensions invariant \cite{Givens1958,Golub2012}.
Such a counter-clockwise rotation in the $(i,j)$-plane by an angle $\alpha$ can be written as applying $G^T(i, j, \alpha)$, where 
\begin{align}
	G(i, j, \alpha) = 
       \begin{bsmallmatrix}   1   & \cdots &    0   & \cdots &    0   & \cdots &    0   \\
                      \vdots & \ddots & \vdots &        & \vdots &        & \vdots \\
                         0   & \cdots &    \cos(\alpha)   & \cdots &    \sin(\alpha)   & \cdots &    0   \\
                      \vdots &        & \vdots & \ddots & \vdots &        & \vdots \\
                         0   & \cdots &   -\sin(\alpha)   & \cdots &    \cos(\alpha)   & \cdots &    0   \\
                      \vdots &        & \vdots &        & \vdots & \ddots & \vdots \\
                         0   & \cdots &    0   & \cdots &    0   & \cdots &    1
       \end{bsmallmatrix}
\end{align}
The trigonometric expressions appear in  the $i$-th and $j$-th rows and columns.
Any orthogonal matrix $U \in \bbR^{d \times d}$ that is a rotation, $U \in SO(d)$, can be decomposed into a product of at most $d(d-1)/2$ Givens rotations.
In general, there exist many possible factorizations.
If $U \in O(d)\setminus SO(d)$, then it cannot be represented directly by a sequence of Givens rotations.
However, a factorization can be obtained up to permutation with a negative sign, e.g., by flipping two columns.

In numerical linear algebra, Givens factors are often used to selectively introduce zero matrix entries by controlling the rotation angle.
This leads to a constructive factorization algorithm, which demonstrates a $d(d-1)/2$-factorization.
To this end, we start with the matrix $U \in SO(d)$ and introduce zeros on the lower diagonal column-wise from left to right and bottom to top within every column.
This is achieved by choosing the rotation subspace $(i,j)$ and a suitable rotation angle to zero-out the matrix element $(i,j)$.
The elimination order is illustrated for $d=4$ by
\begin{align}
\label{eq:givens_qr_elimination_sequence}
    \begin{pmatrix}
        * & * & * & * & \\
        3 & * & * & * & \\
        2 & 5 & * & * & \\
        1 & 4 & 6 & * &
    \end{pmatrix}
\end{align}
After $N=d(d-1)/2$ steps, we have $G_N^T\dots G_1^T U = D$, where $D$ is a diagonal matrix with $D_{kk} = -1$ for an even number of values and $D_{kk} = 1$ otherwise.
This result can be reduced to the identity by selecting two subspaces with values $D_{ii} = D_{jj} = -1$ and applying a rotation by an angle $\alpha = \pi$.
We refer to this algorithm by \textit{structured elimination}.

Apart from this sign ambiguity, we consider factorizations in the broader sense up to signed  permutation of the resulting matrix columns.
To be explicit, the set of signed permutation matrices is defined as $\P_d \coloneqq \{P \in \bbR^{d\times d} \vert P_{ij} \in \{-1,0,1\}, \sum_{i} \abs{P_{ij}} = 1 \;\forall j, \sum_{j} \abs{P_{ij}} = 1 \;\forall i\}$.
For a matrix $U\in O(d)$, to measure approximation quality, we denote an approximation by $\hat U$ and use a symmetrized Frobenius norm criterion up to a signed permutation matrix as follows:
\begin{align}
\label{eq:symnorm}
    \symnorm{U - \hat U} \coloneqq \min_{P\in \P_d} \norm{U - \hat U P}_F \;.
\end{align}
The range of \eqref{eq:symnorm} over the orthogonal group is $[0,\sqrt{2 d})$ as the maximum is obtained for the distance between Hadamard\footnote{A Hadamard matrix is an orthogonal matrix $H $whose entries satisfy $|H_{i,j}|=1/\sqrt{d}$ for all $i,j$.} matrices $H(d)$ and the identity with $\symnorm{H(d) - I}/\sqrt{d} \to \sqrt{2}$ as $d\to \infty$.
Since
    $\norm{A}_F^2 = \E{x\sim \N(0,I)}{\norm{Ax}_2^2} \;$,
the criterion measures the average approximation quality over random Gaussian vectors when applying $\hat U$ instead of $U$.
The motivation for this definition is twofold.
First, this definition allows us to discuss Givens factorizations of orthogonal matrices with negative determinant and henceforth we consider factorization over the orthogonal group $O(d)$ rather than the special orthogonal group $SO(d)$.
Second, it enlarges the class of possible factorization algorithms to those that cannot distinguish between signed permutation matrices.
Observe that since the cost of multiplying by a signed permutation matrix is $\O{d}$ \cite{Knuth1998}, the computational efficiency arguments in this paper are not affected by the permutation equivalence class as we are discussing approximations in the regime of $\O{d\log(d)}$ factors.

\section{Information Theoretic Rate of Givens Representation}
\label{sec:orthogonal_group_approximation}
The elimination algorithm discussed in Section~\ref{sec:givens_factorization_intro} guarantees to factorize any orthogonal matrix in at most $d(d-1)/2$ Givens factors, which corresponds to the dimension of the orthogonal group. Since each Givens factor is parametrized by a single angle, it immediately follows that exact Givens factorization for arbitrary elements $U \in O(d)$ necessarily requires $d(d-1)/2$ factors. 

Hence, this leads to the question of approximate factorization: if one tolerates a certain error $\|U - \hat{U}\|_F \leq \epsilon$, is it possible to find approximations $\hat{U}=\prod_{n \leq N} G_n$ with $N=o(d^2)$, ideally with $N=\mathcal{O}(d \log d)$?
A covering argument shows that generic orthogonal matrices in $d$ dimensions require at least $\Theta(d^2 / \log(d))$ Givens factors to achieve an $\epsilon$-approximate factorization. We denote by $\mu$ the uniform Haar measure on the unitary group, which we normalize for each $d$, $\mu(U(d))=1$.
For notational simplicity, we carry out the proof for the operator $2$-norm.
An analogous argument holds by replacing the operator $2$-norm with the Frobenius norm while re-scaling the error by $\sqrt{d}$.
\begin{lemma}
\label{thm:perturbed_factor_product}
Let $\prod_{n \leq N} G_n$ be a product of Givens factors with rotation angles $\alpha_n$ and $\bar{G}_n$ be the respective perturbed factors with rotation angles $\alpha_n + \delta_n$ and perturbations $0 \leq \delta_n \leq \delta$. Then,
\begin{align}
\label{eq:perturbed_factor_product}
    \norm{\prod_{n \leq N} \bar{G}_n - \prod_{n \leq N} G_n}_F \leq 2 N \delta \;.
\end{align}
\end{lemma}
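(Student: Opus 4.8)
The plan is to reduce the statement to a per-factor estimate and then propagate it through the product by a telescoping argument. First I would bound the deviation of a single perturbed factor. Since $\bar{G}_n$ and $G_n$ differ only in the $2\times 2$ block indexed by their common rotation plane $(i,j)$, the matrix $\bar{G}_n - G_n$ is supported on that block, with the two diagonal entries equal to $\cos(\alpha_n+\delta_n)-\cos(\alpha_n)$ and the two off-diagonal entries equal to $\pm\left(\sin(\alpha_n+\delta_n)-\sin(\alpha_n)\right)$. Because $\sin$ and $\cos$ are $1$-Lipschitz, each of these deviations is bounded in absolute value by $\delta_n \leq \delta$. Summing the squared entries of the block (two copies of each of the two distinct values) gives
\begin{align}
\norm{\bar{G}_n - G_n}_F^2 \leq 2\delta^2 + 2\delta^2 = 4\delta^2 \;,
\end{align}
so $\norm{\bar{G}_n - G_n}_F \leq 2\delta$ for every $n$; this crude bound is where the factor of $2$ originates.

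Next I would express the difference of the two products as a telescoping sum, swapping one factor at a time:
\begin{align}
\prod_{n\leq N}\bar{G}_n - \prod_{n\leq N}G_n = \sum_{k=1}^{N}\left(\prod_{n<k}\bar{G}_n\right)\left(\bar{G}_k - G_k\right)\left(\prod_{n>k}G_n\right) \;.
\end{align}
The identity is verified by expanding the sum and cancelling consecutive terms (equivalently, by induction on $N$). Each summand isolates a single perturbed-versus-exact factor, flanked by a prefix of perturbed factors and a suffix of exact factors, both of which are products of orthogonal matrices and hence themselves orthogonal.

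Finally I would take the Frobenius norm, apply the triangle inequality over the $N$ summands, and use the mixed submultiplicativity $\norm{ABC}_F \leq \norm{A}_2\,\norm{B}_F\,\norm{C}_2$. Since the flanking products are orthogonal, their operator $2$-norm equals $1$, so each term is controlled by $\norm{\bar{G}_k-G_k}_F \leq 2\delta$, and summing over $k$ yields the claimed $2N\delta$. The one point requiring care is precisely this bookkeeping: one must observe that the intermediate products contribute a factor of exactly $1$ in operator norm rather than accumulating, which is what keeps the bound linear in $N$ instead of letting it blow up. I note in passing that the stated bound is slightly conservative—tracking the exact per-factor constant $\norm{\bar{G}_n-G_n}_F = 2\sqrt{2}\,\abs{\sin(\delta_n/2)} \leq \sqrt{2}\,\delta$ would give $\sqrt{2}\,N\delta$, and the identical argument in operator $2$-norm yields $N\delta$—but $2N\delta$ suffices and keeps the constants clean.
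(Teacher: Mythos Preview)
Your proof is correct and follows essentially the same approach as the paper: a telescoping decomposition of the product difference combined with orthogonal invariance of the Frobenius norm, followed by a per-factor bound of $2\delta$. The only cosmetic differences are that the paper phrases the telescoping as an iterated two-term inequality $\norm{U'V'-UV}_F \leq \norm{U'-U}_F + \norm{V'-V}_F$ (using orthogonal invariance directly rather than mixed submultiplicativity), and computes the per-factor deviation exactly as $\norm{\bar{G}_n - G_n}_F = 2\sqrt{1-\cos(\delta_n)}$ before bounding it by $2\delta_n$, which matches the sharper constant you noted in passing.
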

\begin{proof}
For any orthogonal matrices $U,U',V,V'$, we have
\begin{align}
    \norm{U'V' - UV}_F &= \norm{(U + U' - U) V' - UV}_F \nonumber\\
                       &\leq \norm{U (V' - V) }_F + \norm{(U' - U) V'}_F \nonumber \\
                       &= \norm{V' - V}_F + \norm{U' - U}_F,
\end{align}
by using the fact that the Frobenius norm is invariant to orthogonal matrix multiplication.
By iterating this relation, we obtain
\begin{align}
    \norm{\prod_{n \leq N} \bar{G}_n - \prod_{n \leq N} G_n}_F \leq \sum_{n\leq N} \norm{\bar{G}_n - G_n}_F \;.
\end{align}
Since $\bar{G}_n$ and $G_n$ rotate in the same subspace,
\begin{align}
    \norm{\bar{G}_n - G_n}_F = 2 \sqrt{1 - \cos(\delta_n)} \;.
\end{align}
Inequality \eqref{eq:perturbed_factor_product} follows from $\sqrt{1 - \cos(\delta_n)} \leq \delta_n \leq \delta$.
\end{proof}
\begin{theorem}
\label{thm:covering}
Let $\epsilon>0$. If $N = \smallO{d^2 / \log(d)}$, then as $d\to \infty$, 
$$\mu \left(\left\{U \in U(d) \biggr\vert \inf_{G_1\dots G_N} \| U - \prod_n G_n \|_2 \leq \epsilon \right\}\right) \to 0~.$$
\end{theorem}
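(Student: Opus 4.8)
The plan is to run a volumetric counting argument: the set of matrices reachable by $N$ Givens factors has far smaller metric entropy than the whole group $U(d)$, so its $\epsilon$-neighborhood cannot carry positive Haar mass in the limit. Write $S_N \coloneqq \{\prod_{n\le N} G_n\}$ for the set of all $N$-term Givens products and let $S_N^\epsilon$ be its closed $\epsilon$-neighborhood in the operator $2$-norm; this is exactly the set appearing in the statement, since $\inf_{G_1\dots G_N}\norm{U-\prod_n G_n}_2 \le \epsilon$ means $U$ lies within $\epsilon$ of $S_N$. First I would construct an explicit finite net $T$ of $S_N$. Each factor is specified by a rotation plane, of which there are $\binom{d}{2}$, together with an angle in $[0,2\pi)$; discretizing every angle on a grid of spacing $\delta$ yields at most $\big(\binom{d}{2}\cdot 2\pi/\delta\big)^{N}$ candidate products. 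By Lemma~\ref{thm:perturbed_factor_product}, and using $\norm{\cdot}_2\le\norm{\cdot}_F$, replacing each angle by its nearest grid point perturbs the product by at most $2N\delta$ in operator norm, so the choice $\delta=\epsilon/(2N)$ makes $T$ an $\epsilon$-net of $S_N$. Hence $S_N^\epsilon$ is covered by the $2\epsilon$-balls centered at the points of $T$, and $|T|\le \big(\binom{d}{2}\cdot 4\pi N/\epsilon\big)^{N}$.

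The second ingredient is a bound on the Haar measure of a single operator-norm ball. Since $\mu$ is invariant, every ball of a given radius has the same mass, and I would invoke the standard metric-entropy estimate for the $d^2$-dimensional group $U(d)$: a maximal packing by disjoint balls of radius $r$ has at least $(c/r)^{d^2}$ elements, which forces $\mu(B(r)) \le (Cr)^{d^2}$ for an absolute constant $C$ and all $r$ below the diameter. A union bound over the net then gives
\begin{align}
\mu(S_N^\epsilon) \;\le\; |T|\cdot \mu(B(2\epsilon)) \;\le\; \left(\binom{d}{2}\frac{4\pi N}{\epsilon}\right)^{N}(2C\epsilon)^{d^2}\,.
\end{align}

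Taking logarithms, $\log\mu(S_N^\epsilon)\le N\big(\log\binom{d}{2}+\log(4\pi N/\epsilon)\big) + d^2\log(2C\epsilon)$. For fixed $\epsilon$ the first term is $\O{N\log d}$, because $\binom{d}{2}=\O{d^2}$ and $N\le d^2$ make the $\log(N/\epsilon)$ contribution $\O{\log d}$, while the second term equals $-\Theta(d^2)$ once $\epsilon$ is smaller than the absolute constant $1/(2C)$. Under the hypothesis $N=\smallO{d^2/\log d}$ we have $N\log d=\smallO{d^2}$, so the negative $\Theta(d^2)$ term dominates and $\log\mu(S_N^\epsilon)\to-\infty$, which yields the claim.

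I expect the main obstacle to be the ball-volume estimate $\mu(B(r))\le (Cr)^{d^2}$: one must justify that the exponent is precisely the manifold dimension $d^2$ uniformly in $d$, which is the content of the metric-entropy lower bound for $U(d)$ in the chosen metric and is where the $\Theta$ of the companion heuristic originates. A secondary subtlety is the admissible range of $\epsilon$: the argument only bites for $\epsilon$ below an absolute constant, which is natural since the operator-norm diameter of $U(d)$ is $2$ and no decay is possible for large $\epsilon$. Beyond that, the bookkeeping is routine, the only care being to verify that the $\log(N/\epsilon)$ factors stay inside the $\O{N\log d}$ budget so that $N\log d=\smallO{d^2}$ indeed controls the net term.
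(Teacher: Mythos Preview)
Your proposal is correct and follows essentially the same covering argument as the paper: discretize the Givens angles via Lemma~\ref{thm:perturbed_factor_product} to build a net of size $(\mathrm{poly}(d,N)/\epsilon)^N$, then compare this against the $\Theta(\epsilon^{-\Theta(d^2)})$ metric entropy of the full group. The paper packages this as a proof by contradiction (a positive-measure subset would require a cover of size $\Theta(\epsilon^{-d(d-1)/2})$, forcing $N=\O{d^2/\log d}$), whereas you run the equivalent direct union bound $\mu(S_N^\epsilon)\le |T|\cdot\mu(B(2\epsilon))$; the content is the same, and your identification of the ball-volume estimate as the one nontrivial input is exactly right.
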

\begin{proof}
Consider an $\epsilon$-covering of the unitary group, i.e., a discrete set $\mathcal{X}$ such that $\inf_{X \in \mathcal{X}} \| U - X\|_2 \leq \epsilon$ for all $U \in U(d)$. 
Since the manifold dimension of the unitary group is $d(d-1)/2$, we need $|\mathcal{X}|= \Theta({\epsilon^{-d(d-1)/2}})$ many balls for that cover.
Let $N \coloneqq N(d)$ be the number of available Givens factors for approximation at dimension $d$, and $\mathcal{A}_{N} = \{ X \in U(d) \vert \inf_{G_1\dots G_{N}} \| X - \prod_{n\leq N} G_n \|_2 \leq \epsilon/2\}$ denote the set of unitary operators which can be effectively approximated with $N$ Givens terms. 
Now, suppose that $\mu(\mathcal{A}_{N}) \geq c>0$, i.e., the set of group elements admitting an $\epsilon/2$-approximation has positive measure. This implies that any $\epsilon$-cover of $\mathcal{A}_{N}$ must be of size $\Theta({\epsilon^{-d(d-1)/2}})$. Let us build such an $\epsilon$-cover. 

If we discretize the rotation angle to a value $\delta > 0$, then there are $(d(d-1)/2\delta)$ many different quantized Givens factors, denoted by $\bar{G}_i$, and consequently ${(d(d-1)/2\delta)^N}$ many different sequences.
It follows that if $\delta \coloneqq \frac{\epsilon}{4N}$, the discrete set $\mathcal{Y}=\{\prod_{n \leq N} \bar{G}_{i_n}\}$ containing all possible sequences of length $N$ of quantized Givens rotations is an $\epsilon$-cover of $\mathcal{A}_N$. 
Indeed, by using Lemma~\ref{thm:perturbed_factor_product} and the fact that the operator $2$-norm is bounded by the Frobenius norm,  we have $\forall\,X \in \mathcal{A}_N\,,$
$$\| X - \prod_{n \leq N} \bar{G}_n\|_2 \leq \| X - \prod_{n\leq N} G_n \|_2 + 2 N \delta \leq \frac{\epsilon}{2} + \frac{\epsilon}{2}=\epsilon~. $$
Since $|\mathcal{Y}| =\left(\frac{2d(d-1)N}{\epsilon}\right)^N$, it follows that 
$$\left(\frac{2d(d-1)N}{\epsilon}\right)^N = \Theta({\epsilon^{-d(d-1)/2}})~,$$
which implies $N = \O{d^2 / \log d}$.
\end{proof}

An immediate consequence of Theorem~\ref{thm:covering} is that generic effective approximation, i.e., with a number of factors $N=\O{d \log d}$, is information theoretically impossible. 
However, the situation may be entirely different for structured distributions of unitary operators.
For that purpose, we develop an algorithm to obtain effective approximations based on sparsity-inducing norms. 

\section{Givens Factorization and Coordinate Descent on $O(d)$}
\label{sec:coordinate_descent}
In this section, we offer an alternative viewpoint presented by \citet{Shalit2014} that interprets Givens factorization as manifold coordinate descent on the orthogonal group over a certain potential energy. 

The orthogonal group $O(d)$ is a matrix Lie group  with associated Lie algebra $\mathfrak{o}(d) = \Skew(d) = \{X \in \bbR^{d\times d} \vert X = -X^T\}$, the set of $d\times d$ skew-symmetric matrices \cite{Hall2003}.
The tangent space at an element $U$ is $T_U O(d) = \{XU \vert X \in \Skew(d)\}$ and the Riemannian directional derivative of a differentiable function $f$ in the direction $XU\in T_U O(d)$ is given by
\begin{align}
    D_X f(U) = \eval{\od{}{\alpha}f(\Exp(\alpha X)U)}_{\alpha=0} \;,
\end{align}
where $\Exp : \mathfrak{o}(d) \rightarrow O(d)$ is the matrix exponential.
If we choose the basis $\{ X_{ij} = e_i e_j^T - e_j e_i^T \vert 1 \leq i \leq j \leq d\}$ for the tangent space, then $D_{X_{ij}} f(U)$ represents the directional derivative in such a coordinate direction.
A coordinate descent algorithm uses a criterion to choose coordinates $(i,j)$ and a step size (rotation angle) $\alpha$ to iteratively update
\begin{align}
\label{eq:coord_descent_iteration}
    U^{k+1} = \Exp(-\alpha X_{ij}) U^k \;.
\end{align}
A greedy criterion determines the best descent on $f$ by a search over all possible coordinate directions $\{X_{ij}\}_{i\leq j \leq d}$ with the optimal step size obtained by a line search.

A Givens factor can be interpreted as a coordinate descent step over the orthogonal group.
This follows from the relation
\begin{align}
    \Exp(-\alpha X_{ij}) = G^T(i,j,\alpha) \;.
\end{align}
In $d=3$, an explicit example of the correspondence between Lie algebra and Lie group elements is
\begin{align} 
\left(
\begin{array}{ccc}
 0 & 0 & 0 \\
 0 & 0 & -\alpha \\
 0 & \alpha & 0\\
\end{array}
\right) 
\longrightarrow
\left(
\begin{array}{ccc}
 1 & 0 & 0 \\
 0 & \cos (\alpha ) & -\sin (\alpha ) \\
 0 & \sin (\alpha ) & \cos (\alpha ) \\
\end{array}
\right) ~.
\end{align}
Suppose we want to minimize a function $f$ over the orthogonal group, 
\begin{align}
\label{eq:generic_manifold_opt}
    \min_{U\in O(d)} f(U)  \;.
\end{align}
Then minimizing \eqref{eq:generic_manifold_opt} with manifold coordinate descent iterations \eqref{eq:coord_descent_iteration} yields a Givens factorization of the initial point $U^0$.
A truncated sequence leads to an approximate factorization.
From this viewpoint, the quality of a Givens factorization can be controlled by properties of the function $f$.
In the following, we construct an objective function that results in approximate factorization with less than $\mathcal{O}(d^2)$ factors.

\section{Sparsity-Inducing Dynamics}
\label{sec:l1_criterion}
To factorize a matrix $U \in O(d)$ one may choose it as an initial value to problem \eqref{eq:generic_manifold_opt} when minimizing a suitable potential function $f$ with manifold coordinate descent.
We want to find a factorization up to signed permutation of the matrix columns.
As the signed permutation matrices are the sparsest orthogonal matrices, we consider an energy function that quickly enforces sparsity, the element-wise $L_1$-norm of a matrix,
\begin{align}
\label{eq:matrix_l1}
    f(U) \coloneqq d^{-1}\norm{U}_1 = d^{-1} \sum_{i,j=1}^d \abs{U_{ij}} \;.
\end{align}
Although $f$ is convex in $\mathbb{R}^{d^2}$ (since it is a norm), due to the non-convexity of the domain, the problem $\min_{U \in O(d)} f(U)$ is non-convex . The landscape of $f$ characterizes the class of orthogonal matrices that admit effective Givens approximation. 
It is easy to see that the global minima of $f$ in $O(d)$ consist of signed permutation matrices, with $\min f(U)=1$, and the global maxima are located at Hadamard matrices, with $\max f(U) = \sqrt{d}$. 
A more involved question concerning the presence or absence of spurious local minima of $f$ is of interest. 
The following proposition partially addresses this question by showing that critical points of $f$ are necessarily located at $U \in O(d)$ with some of its entries set to zero. 
\begin{proposition}
\label{thm:l1_global_angle_opt}
Let $x\in \bbR^{2\times d}$ and let
\begin{align}
R(\alpha) \coloneqq \
\begin{bmatrix}
    \cos(\alpha) & -\sin(\alpha) \\
    \sin(\alpha) & \cos(\alpha)
\end{bmatrix}
\end{align}
be a counter-clockwise rotation in the plane by an angle $\alpha$. 
Consider the function $g(\alpha) \coloneqq \norm{R(\alpha) x}_1$.
Then, at every local minimum $\alpha^*$ of $g$ there exist indices $k,l$ such that $\left(R(\alpha^*) x\right)_{kl} = 0$.
\end{proposition}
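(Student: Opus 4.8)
The plan is to reduce $g$ to a sum of single-column contributions and exploit the strict concavity of each contribution away from its kinks. First I would split $x$ into its $d$ columns and write each column in polar form as $x^{(l)} = r_l(\cos\theta_l,\sin\theta_l)^T$ with $r_l \geq 0$. Since $R(\alpha)$ acts as a rotation, $R(\alpha)x^{(l)} = r_l(\cos(\alpha+\theta_l),\sin(\alpha+\theta_l))^T$, and therefore
\begin{align}
g(\alpha) = \sum_{l=1}^d r_l\, h(\alpha + \theta_l), \qquad h(\phi) \coloneqq \abs{\cos\phi} + \abs{\sin\phi}.
\end{align}
The claim that some entry $(R(\alpha^*)x)_{kl}$ vanishes is precisely the claim that $\alpha^* + \theta_l$ is an integer multiple of $\pi/2$ for some $l$ with $r_l > 0$, i.e.\ that $\alpha^*$ is a kink of one of the summands.

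The key observation is that $h$ is smooth except at multiples of $\pi/2$, and on each open smooth piece one has $h(\phi) = \varepsilon_1 \cos\phi + \varepsilon_2 \sin\phi$ for fixed signs $\varepsilon_1,\varepsilon_2 \in \{-1,+1\}$ (the signs of $\cos\phi$ and $\sin\phi$), so that $h''(\phi) = -h(\phi) < 0$. Thus every summand is strictly concave between consecutive kinks. I would then argue by contradiction: suppose $\alpha^*$ is a local minimum at which no entry vanishes. Then $\alpha^*$ avoids the kinks of all summands (those $l$ with $r_l = 0$ contribute nothing), so $g$ is $C^2$ on a neighborhood of $\alpha^*$ with
\begin{align}
g''(\alpha) = \sum_{l=1}^d r_l\, h''(\alpha + \theta_l) = -\sum_{l=1}^d r_l\, h(\alpha + \theta_l) = -g(\alpha).
\end{align}
Since $h \geq 1$ everywhere, $g(\alpha) > 0$ on this neighborhood unless $x \equiv 0$, so $g'' < 0$ and $g$ is strictly concave there. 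A strictly concave function has no interior local minimum, contradicting the choice of $\alpha^*$.

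Putting these together, at any local minimum $\alpha^*$ the smoothness hypothesis must fail, meaning $\alpha^* + \theta_l \in \frac{\pi}{2}\mathbb{Z}$ for some $l$, which is exactly the statement that $\cos(\alpha^*+\theta_l) = 0$ or $\sin(\alpha^*+\theta_l) = 0$, i.e.\ $(R(\alpha^*)x)_{kl} = 0$ for a suitable row $k$. I expect the only delicate points to be bookkeeping: handling the degenerate case $x \equiv 0$ (where every entry is already zero and the statement is vacuous), discarding zero-norm columns, and stating carefully that a kink of a strictly concave summand is the only way $g$ can fail to be locally strictly concave and thereby admit a local minimum. The analytic heart, $h'' = -h$, is a one-line computation per smooth piece.
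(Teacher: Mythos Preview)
Your proof is correct and follows essentially the same approach as the paper: both argue that away from the zero-entry locus $g$ is $C^2$ with $g''(\alpha) = -g(\alpha) < 0$, hence strictly concave, so no local minimum can occur there. Your polar-coordinate decomposition and the auxiliary function $h$ make the computation more explicit than the paper's terse version, but the analytic core is identical.
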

\begin{proof}
We show equivalently that any stationary point $\alpha^*$ with $\left(R(\alpha^*) x\right)_{kl} \neq 0 \; \forall k,l$ is a local maximum.
At any such point the function $g$ is twice continuously differentiable and the second derivative is
\begin{align}
    \frac{\partial^2 g}{\partial \alpha^2}\biggr\vert_{\alpha = \alpha^*} = - g(\alpha^*) < 0 \;.
\end{align}
Consequently, any stationary point under this assumption must be a local maximum.
\end{proof}
Proposition~\ref{thm:l1_global_angle_opt} implies that for a given subspace $(i,j)$, the best rotation angle can be found by checking all axis transitions for the 2D points $(u_{ik}, u_{jk}), k\in \{1,\dots d\}$ and selecting the angle that most minimizes the objective among them.
It also implies that any local minimum of $f$ must correspond to an orthogonal matrix with at least $d$ zeros placed at specific entries, such that no two rows or columns have the same support. 
Indeed, Proposition \ref{thm:l1_global_angle_opt} implies that there exists a continuous path $t\mapsto U(t)=G(i,j,\alpha(t))$ with $\alpha(0)=0$, generated by a Givens rotation of angle $\alpha(t)$, such that $f(U(t))$ is non-increasing at $t=0$, provided one can find two rows or columns of $U$ with the same support. However, this result does not exclude the possibility that $f$ has spurious local minima at matrices $U$ with the above special sparsity pattern. In fact, we conjecture that the landscape of $f$ does have spurious local minima.

A manifold coordinate descent on the objective function $f$ is explicitly stated in Algorithm~\ref{alg:l1_coordinate_descent}.
The crucial step involves optimizing this objective in the rotation angle $\alpha$ for a given subspace $(i,j)$, which is a non-convex optimization problem.
Nevertheless, the global optimum can be found as stated by Proposition~\ref{thm:l1_global_angle_opt}.
In $d$ dimensions, this step requires $d$ operations.
Consequently, due to the squared dimension dependence of the double for-loop, a naive implementation of Algorithm~\ref{alg:l1_coordinate_descent} would require $\O{d^3}$ operations .
However, applying the selected Givens factor in each step changes only two rows of the matrix; thus, in the subsequent iteration, only those pairs of rows that involve the previously modified ones need to be re-computed.
These are $\O{d}$ rows and altogether the runtime of an iteration is $\O{d^2}$.

\begin{algorithm}[tb]
   \caption{Coordinate descent on the $L_1$-criterion}
   \label{alg:l1_coordinate_descent}
\begin{algorithmic}
   \STATE {\bfseries Input:} initial value $U^0 \in O(d), f(U) = \norm{U}_1$
   \REPEAT
       \FOR{$i=1$ {\bfseries to} $d$}
           \FOR{$j=1$ {\bfseries to} $d$}
                \IF {$\alpha_{ij}^*$ not up-to-date}
                    \STATE $\alpha_{ij}^* = \argmin_{\alpha} f(G^T(i,j,\alpha)U^k)$
                \ENDIF
           \ENDFOR
       \ENDFOR
       \STATE $i^*, j^* = \argmin_{i,j} f(G^T(i,j,\alpha_{ij}^*)U^k)$
       \STATE $U^{k+1} = G^T(i^*,j^*,\alpha_{i^*j^*}^*)U^k$
   \UNTIL{$\symnorm{U^{k+1} - I} < \varepsilon$ or $maxIter$ is reached}
\end{algorithmic}
\end{algorithm}

\section{Numerical Experiments}
\label{sec:factorization_results}

\subsection{Planted Models}
\label{sec:planted_models}
Theorem~\ref{thm:covering} shows that we cannot expect to find good approximations to Haar-sampled matrices with less than $\O{d^2/\log(d)}$ Givens factors.
Therefore, we focus on a distribution for which we can control approximability.
We use the uniform distribution over the set $\{U \in SO(d) \vert U = G_1\cdots G_K, G_k = G(i_k, j_k, \alpha_k)\}$, where each $G_k$ is obtained by first sampling a subspace uniformly at random (with replacement), and then sampling the corresponding angle uniformly from $(0,2\pi)$.
We denote the resulting distribution by the $K$-planted distribution $\nu_K$.
While this distribution may be sparse in the number of Givens factors for $K \ll d(d-1)/2$, this does not imply that the resulting matrices are sparse. 
In fact, products of Givens matrices become dense quickly. 
It follows from the Coupon Collector's Lemma that matrices generated with  $\Theta(d \log_2(d))$ Givens factors are already dense with high probability. 
To visualize this effect, Figure~\ref{fig:planted_solution_sparsity} shows the $L_0$-norm as a function of planted Givens factors.
\begin{figure}[t]
	\centering
     \resizebox{\columnwidth}{!}{\input{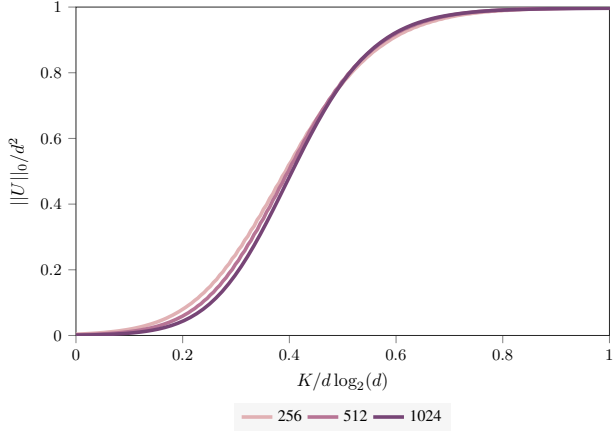}}
    \caption{Average sparsity based on $100$ samples of matrices drawn from the $K$-planted distribution over $SO(d)$ for increasing $K$. Standard deviation is negligible and not shown. Matrices become dense quickly as the number of planted Givens factors grows. In particular, matrices sampled from the $d\log_2(d)$-planted distribution are already dense.}
    \label{fig:planted_solution_sparsity}
\end{figure}

We compare the following factorization algorithms.
A \textit{greedy baseline} iteratively finds the Givens factor that most minimizes the objective \eqref{eq:symnorm}.
The \textit{structured elimination} algorithm described in Section~\ref{sec:givens_factorization_intro} yields a sequence of Givens factors that eliminate matrix entries in the order \eqref{eq:givens_qr_elimination_sequence} and is guaranteed to find a perfect factorization with $d(d-1)/2$ factors.
Our sparsity-inducing algorithm minimizes the $L_1$-criterion \eqref{eq:matrix_l1} via a \textit{manifold coordinate descent} scheme.\footnote{An implementation of these algorithms can be found at\\ \url{https://github.com/tfrerix/givens-factorization}}

In an initial experiment, we demonstrate the approximation effectiveness of these algorithms; the results are shown in Figure~\ref{fig:dlogd_planted_approximation}.
They indicate that minimizing the $L_1$-criterion improves over directly minimizing the Frobenius norm (greedy baseline).
\begin{figure}[t]
	\centering
	\vspace{0.1cm}
    \resizebox{\columnwidth}{!}{\input{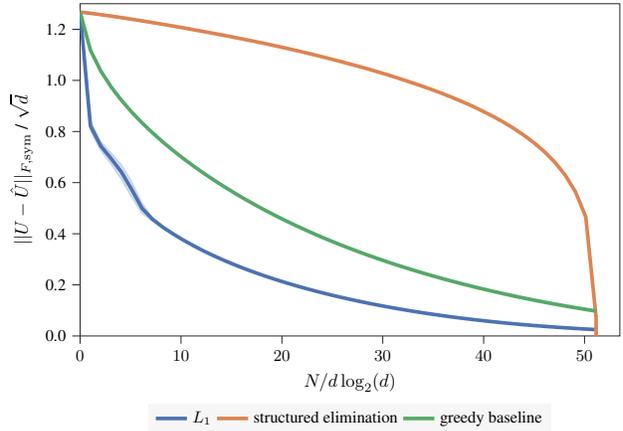}}
    \caption{Average Frobenius norm approximation error in $d~=~1024$ dimensions when factorizing $10$ samples drawn from the $d\log_2(d)$-planted distribution over $SO(d)$ with $d(d-1)/2$ factors. Shaded area denotes standard deviation.}
    \label{fig:dlogd_planted_approximation}
\end{figure}
Next, we analyze the approximability of samples drawn from the $K$-planted distribution $\nu_K$ as a function of $K$.
To obtain a Givens sequence, we factorize these samples with manifold coordinate descent on the $L_1$-objective \eqref{eq:matrix_l1}.
Along the optimization path, we define $N_\epsilon(U)$ as the number of Givens factors for which the normalized approximation error \eqref{eq:symnorm} is smaller than $\epsilon = 0.1$, i.e., 
\begin{align}
\label{eq:givens_factors_at_error_threshold}
    N_\epsilon(U) \coloneqq \min \left\{N \biggr\vert \frac{\symnorm{U - G_1\dots G_N}}{\sqrt{d}} < \epsilon\right\} 
\end{align}
We refer to a Givens sequence with such $N_\epsilon(U)$ factors as an $\epsilon$-factorizing sequence of $U$.
In Figure~\ref{fig:k_planted_approximation}, the sample average $N_\epsilon = n^{-1} \sum_{i=1}^n N_\epsilon(U_i)$ for $n=10$ samples is shown as a function of $K$.
We are interested in the rate at which $N_\epsilon$ grows for increasing $K$.
The data in Figure~\ref{fig:k_planted_approximation} show that for $K=\alpha d\log_2(d)$ and $N_\epsilon=\beta d\log_2(d)$, the ratio $\beta/\alpha$ is not independent of $d$.
For the shown dimension regime this implies that for $K=\O{d\log(d)}$, $N_\epsilon$ grows polynomial in $d$, albeit with small rate for few planted factors.
To make this relation more precise, we extract the exponent $\eta$ of a model $N_\epsilon \sim d^\eta$.
Figure~\ref{fig:k_planted_rate} shows that the growth is slightly superlinear in the few-factor regime and becomes quadratic towards $K=d\log_2(d)$. 
Analytically characterizing such growth is left for future work. 

That said, our initial results suggest the existence of a computational-to-statistical gap for the recovery (or detection) of sparse planted Givens factors. 
Indeed, Theorem 1 proves that recovery with $K~=~\O{d^2 /\log d}$ planted factors is information-theoretically possible, whereas our greedy recovery strategy is only effective for $K~=~\O{d \log d}$. 
The mathematical analysis of our coordinate descent algorithm in the regime where effective approximation is feasible is beyond the scope of the present paper.
In particular, proving that $N_\epsilon = \O{d \log d}$  is sufficient when $K \lesssim d\log d$ remains an open question. 

\begin{figure}[t]
	\centering
    \resizebox{\columnwidth}{!}{\input{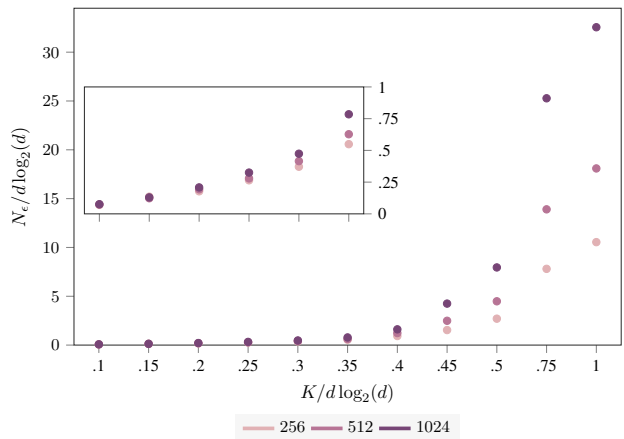}}
    \caption{Average number of Givens factors necessary to factorize a $K$-planted matrix in $d\in \{256, 512, 1024\}$ dimensions up to desired accuracy as a function of $K$. Here, $\epsilon = 0.1$ is the accuracy as defined in expression \eqref{eq:givens_factors_at_error_threshold}. Note that the x-axis is shown with unequal spacing to highlight the relevant regime of the data. The inset plot shows a zoom of the first data points.}
    \label{fig:k_planted_approximation}
\end{figure}

\begin{figure}[t]
	\centering
    \resizebox{\columnwidth}{!}{\input{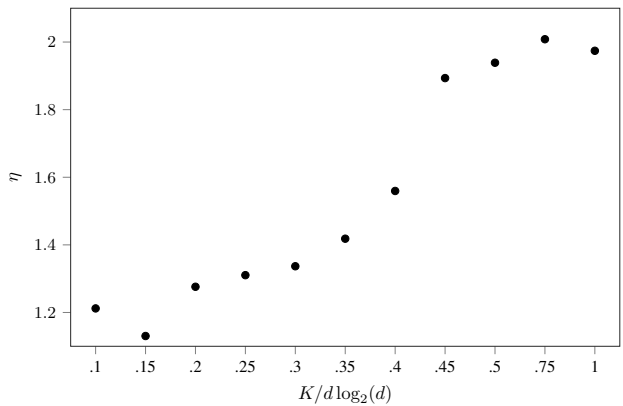}}
    \caption{Polynomial growth rate $\eta$ of the model $N_\epsilon \sim d^{\eta}$ as a function of the number of planted factors estimated from $d\in \{256, 512, 1024\}$. Note that the x-axis is shown with unequal spacing to highlight the relevant regime of the data.}
    \label{fig:k_planted_rate}
\end{figure}

\subsection{Application: Graph Fourier Transform}
\label{sec:gft_application}
The method introduced in this paper is useful in situations where one at first computes an approximation to a unitary operator, which is subsequently applied many times.
Hence, the trade-off between initial computation and approximation on the one hand and efficient application on the other hand is in favor of the latter.
Canonical examples for this scenario are orthogonal basis transforms.
In this paper, we draw motivation from the FFT, which yields a speed-up of applying a Fourier transformation over a regular grid domain from $\mathcal{O}(d^2)$ to $\mathcal{O}(d\log(d))$ time complexity \cite{Cooley1965}.
However, these speed-ups do not carry over when the domain is unstructured, such as general graphs.
Here, we compute an effective approximation of the graph Fourier transformation (GFT).
Consider a simple, undirected graph with degree matrix $D$ and adjacency matrix $A$.
The unnormalized graph Laplacian is defined as $L\coloneqq D - A$, which is a positive semi-definite, symmetric matrix.
The GFT is represented by the orthogonal matrix that diagonalizes $L$. 

A baseline for our method is the Jacobi algorithm \cite{Jacobi1846}, which diagonalizes a symmetric matrix $L$ by greedily minimizing the off-diagonal squared Frobenius norm, 
\begin{align}
\label{eq:off_diagonal_fro}
    \off(L) \coloneqq  \norm{L}_F^2 - \sum_{k=1}^d L_{kk}^2 \;.
\end{align}
This is achieved by zeroing-out the largest matrix element in absolute value at every iteration.
To this end, a Givens matrix similarity transformation with a suitably chosen rotation subspace and rotation angle is applied.
However, the Jacobi algorithm does not guarantee factorization in a finite number of steps; in particular, it may take more than $N=d(d-1)/2$ iterations.
In fact, the algorithm converges linearly \cite{Golub2012},
\begin{align}
    \off(L^{k+1}) \leq \left( 1 - \frac{1}{N} \right) \off(L^k) \;.
\end{align}
If the iteration number $k$ is large enough, quadratic convergence was shown by \citet{Schoenhage1964}.
Hence, the method is ineffective for small iteration numbers and in high dimensions.
A truncated version of this algorithm was used by \citet{Magoarou2016} to obtain an approximation to the GFT.
The objective \eqref{eq:off_diagonal_fro} of the Jacobi method is motivated by approximating the spectrum of the symmetric matrix through the Gershgorin circle theorem \cite{Gershgorin1931}.
However, we argue here that a criterion focused on approximating the eigenbasis of the symmetric matrix directly yields a more effective approximation to this orthogonal basis transformation.
We consider the eigendecomposition $L=U\Lambda U^T$ and compute an approximation of the orthogonal matrix $U$ with the algorithms outlined in Section~\ref{sec:planted_models}.
We demonstrate this procedure on Barabási-Albert random graphs and several real world graphs.

The Barabási-Albert model starts with $n_0$ unconnected vertices and iteratively adds vertices to the graph, which are connected to a number $m$ of already existing ones with a probability proportional to the degree of these vertices.
This construction is known as preferential attachment and induces a scale-free degree distribution found in real world graphs \cite{Barabasi1999}.
The details of generating these graphs are described in Table~\ref{tab:barabasi_albert}.
\begin{table}[t]
\caption{Construction of Barabási-Albert graphs. An $n$-vertex graph is constructed by choosing $n_0=m_k$ initial vertices, then adding vertices and connecting them to $m_k$ of already existing ones with a probability proportional to the degree of these vertices. $m_k$ is chosen such that the number of resulting edges is approximatly $k\cdot 0.25n(n-1)/2$.}
\label{tab:barabasi_albert}
\vskip 0.15in
\begin{center}
\begin{small}
\begin{sc}
\begin{tabular}{lccccc}
\toprule
$n$ & $64$ & $128$ & $256$ & $512$ & $1024$ \\
\toprule
\toprule
$m_1$ & $54$ & $109$ & $218$ & $437$ & $874$ \\
\midrule
$m_2$ & $36$ & $69$ & $136$ & $267$ & $528$ \\
\bottomrule
\end{tabular}
\end{sc}
\end{small}
\end{center}
\vskip -0.1in
\end{table}
We approximate the corresponding graph Laplacians with $n\log_2(n)$ factors leading to the results shown in Figure~\ref{fig:barabasi_albert}.
While our sparsity-inducing algorithm yields better factorizations in most cases, there exist scenarios, where the greedy baseline results in better approximations ($d\in\{512, 2014\}$ for $\sim 0.25 n \log_2(n)$ edges).
\begin{figure}[t]
	\centering
    \resizebox{\columnwidth}{!}{\input{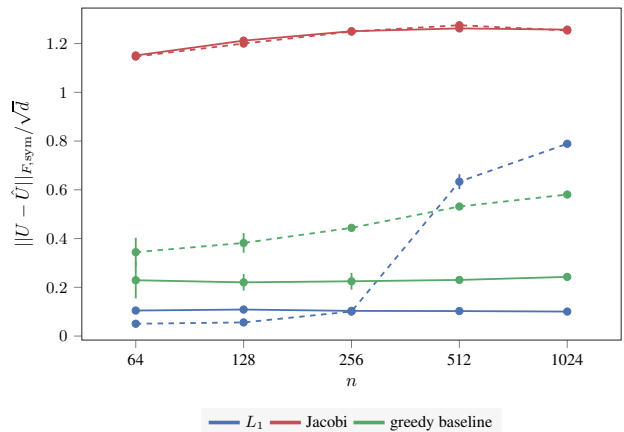}}
    \caption{Approximate factorization of the graph Laplacian of $n$-vertex Barabási-Albert graphs with $n\log_2(n)$ factors. Data points are averages of $10$ samples, vertical lines denote standard deviation. The solid (--) lines show factorizations of graphs with ${\sim 0.5 n(n-1)/2}$ edges, while the dashed (- -) lines show factorizations of graphs with $\sim 0.25 n(n-1)/2$ edges.}
    \label{fig:barabasi_albert}
\end{figure}
Finally, we demonstrate approximate factorization of the graph Laplacian of various real world graphs listed in Table~\ref{tab:real_world_graphs}.
Our $L_1$-algorithm yiels the best factorization for the Minnesota, HumanProtein, and EMail graphs, while the greedy baseline algorithm is superior for the Facebook graph.

A simple strategy to improve the performance of our $L_1$ greedy method with mild computational overhead is to perform beam-search, which is beyond the scope of this paper. 
Overall, it remains an open question to more closely characterize the graphs for which our sparsity-inducing algorithm yields effective approximations of the GFT.

\begin{table}[t]
\caption{GFT approximation for real world graphs with $n$ vertices and $n_e$ edges.}
\label{tab:real_world_graphs}
\vskip 0.15in
\begin{center}
\begin{small}
\begin{tabular}{lcc}
\toprule
& $n$ & $n_e$ \\
\toprule
\toprule
\textsc{Minnesota} & 2642 & 3304 \\
\cite{pygsp} &&\\
\midrule
\textsc{HumanProtein} & 3133 & 6726 \\
\cite{Rual2005} &&\\
\midrule
\textsc{EMail} & 1133 & 5451 \\
\cite{Guimera2003} &&\\
\midrule
\textsc{Facebook} & 2888 & 2981 \\
\cite{McAuley2012} &&\\
\bottomrule
\end{tabular}
\end{small}
\end{center}
\vskip -0.1in
\end{table}

\begin{figure}[t]
	\centering
    \resizebox{\columnwidth}{!}{\input{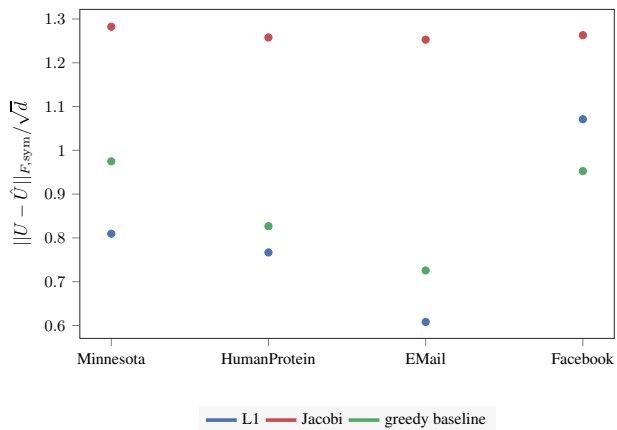}}
    \caption{Approximate factorization of the graph Laplacian of various $n$-vertex real world graphs with $n\log_2(n)$ factors.}
    \label{fig:real_world_graphs}
\end{figure}

\section{Discussion}
We analyzed the problem of approximating orthogonal matrices with few Givens factors.
While a perfect factorization in $\O{d^2}$ is always possible,  an approximation with fewer factors is advantageous if the orthogonal matrix is applied many times.
We showed that effective Givens factorization of generic orthogonal matrices is impossible and inspected a distribution of planted factors, which allows us to control approximability. 
Our initial results suggest that sparsity inducing factorization is promising beyond the sparse matrix regime.
However, it remains an open problem to further characterize the matrices that admit effective factorization using manifold coordinate descent on an $L_1$-criterion.\\
This work opens up questions we believe are important both from a theoretical and an applied perspective. 
On the theory side, important problems arising from our analysis are: \emph{(i)} a complete description of the landscape of $f(U)=\|U\|_1$ over the orthogonal and unitary groups, \emph{(ii)} a precise classification of the detection threshold $K(d)$ below which it is possible to discriminate a $K$-planted sample from a Haar sample in polynomial time, and \emph{(iii)} a guarantee that the proposed sparse Givens coordinate descent algorithm requires $N=\Theta(d \log d)$ terms for $K\leq C d \log d$ for some constant $C>0$. 
These questions suggest a learning approach whereby our sparsity promoting potential $f$ would be replaced by a classifier $f_\theta$ trained to discriminate between $K$-planted and Haar distributions. 
From an applied perspective, the method allows to approximately invert a time-varying symmetric linear operator $H(t)$. Similar to the Woodbury formula for low-rank updates of an inverse, one could set up an approximate Givens factorization of the eigenbasis of $H(t_0)$, and update it efficiently at subsequent times. If successful, this could dramatically improve the efficiency of second-order optimization schemes, where $H(t)$ is the Hessian of a loss function. 

\section*{Acknowledgements}
The authors would like to thank Oded Regev for in-depth discussions and early feedback, as well as Kyle Cranmer and Lenka Zbedorova for fruitful discussions on the topic, and Yann LeCun for introducing us to the problem. This work was partially supported by NSF grant RI-IIS 1816753, NSF CAREER CIF 1845360, the Alfred P. Sloan Fellowship and Samsung Electronics. 

\newpage
\bibliography{bibliography}
\bibliographystyle{icml2019}

\end{document}